\theoremstyle{definition}
\newtheorem{theo}{Theorem}[section]
\newtheorem{lemm}[theo]{Lemma}
\newtheorem{prop}[theo]{Proposition}
\theoremstyle{definition}
\theoremstyle{definition}
\newtheorem{rem}[theo]{Remark}
\numberwithin{equation}{section}
\title{Amenability-like properties of $C(X, A)$}
\author{S. Ghoraishi}
\address{ Department of Mathematics, Central Tehran Branch, Islamic Azad University, Tehran, Iran.
 e-mail: {\tt esm.ghoraishimadineh.sci@iauctb.ac.ir }}
\author{A. Mahmoodi}
\address{  Department of Mathematics, Central Tehran Branch, Islamic Azad University, Tehran, Iran.
 e-mail: {\tt a\_mahmoodi@iauctb.ac.ir}}
\author{A. R. Medghalchi}
\address{  Faculty of Mathematical Sciences and Computer, Kharazmi University, 50 Taleghani Avenue, Tehran, Iran.
 e-mail: {\tt a\_medghalchi@khu.ac.ir}}
\begin{document}

\pagestyle{headings}

\begin{abstract}
Let $A$ be a Banach algebra and $X$ be a compact Hausdorff space.
For given homomorphisms $ \sigma \in Hom(A)$ and $\tau \in Hom(C(X,
A))$, we introduce homomorphisms $\tilde{\sigma}\in Hom(C(X, A)) $
and $\tilde{\tau}_x \in Hom(A)$, where $x \in X$. We then study both
$\tilde{\sigma}$-(weak) amenability of $C(X, A)$, and
$\tilde{\tau}_x$-(weak) amenability of $A$.
\end{abstract}

\maketitle Keywords:

   $\sigma$-amenability,
    $\sigma$-weak amenability, $\sigma$-bounded approximate
    diagonal.

MSC 2010: Primary: 46M10; Secondary: 46M18, 46H20.
\section{Introduction}
Let $X$ be a compact Hausdorff space and let $A$ be a Banach
algebra. It is known that $\mathit{C(X,A)}$, the set of all
$A$-valued continuous functions on $X$, is a Banach algebra with
pointwise algebraic operations and the uniform norm $||f||_\infty :=
\sup_{x \in X} ||f(x)||$, $f \in C(X,A)$ \cite{K}. In the nice
papers \cite{R.Y,Y}, Ghamarshoushtari and Zhang studied amenability
and weak amenability of $\mathit{C(X,A)}$. They showed that
$\mathit{C(X,A)}$ is amenable if and only if $A$ is amenable.
Further, if $A$ is commutative, they proved that $C(X,A)$ is weakly
amenable if and only if $A$ is weakly amenable.

Let $A$ be a Banach algebra. We denote by $ Hom(A)$ the space of all
continuous homomorphisms from $A$ into $A$. Let $A$ be a Banach
algebra, $E$ be a Banach $A$-bimodule and let $\sigma \in Hom(A)$.
 A bounded linear map $D: A\longrightarrow E$ is $ \sigma$-\textit{derivation} if $ D(ab)=D(a)\cdot \sigma(b)+\sigma(a)\cdot D(b)$, for $a, b\in A$.
 A $ \sigma$-derivation $D:A\longrightarrow E$ is $ \sigma$-\textit{inner derivation} if there exists $x\in E$ such that
 $D(a)= \sigma(a)\cdot x-x \cdot \sigma(a)$, for all $a\in A$. A Banach algebra $A$ is $ \sigma$-\textit{amenable}
 if for every Banach $A$-bimodule $E$, every $ \sigma$-derivation $D: A\longrightarrow
 E^*$ is $ \sigma$-inner. Especially, a Banach algebra $A$ is $ \sigma$-\textit{weakly amenable} if every $ \sigma$-derivation $D: A\longrightarrow
 A^*$ is $ \sigma$-inner. It is known that the projective tensor
 product $A\hat{\otimes} A$ is a Banach $A$-bimodule in a natural
 way. A bounded net $(u_ {\alpha})\subset A\hat{\otimes}A$
  is a $\sigma$-\textit{bounded approximate diagonal} for $A$ if
 $$\sigma(a)\cdot u_{\alpha} -u_{\alpha}\cdot\sigma(a) \longrightarrow 0  ,\ \ \ \text{and} \ \ \  \pi(u_{\alpha}) \sigma(a) \longrightarrow \sigma(a)
 \ \ \ ( a \in A) , $$ where $\pi : A\hat{\otimes} A \longrightarrow A$ is the product map defined by $\pi(a\hat{\otimes} b) = ab $.

Before preceding further, we set up our notations. Let $X$ be a
compact Hausdorff space and let $A$ be a Banach algebra. For each $x
\in X$, we consider the continuous epimorphism  $ T_x:
C(X,A)\longrightarrow A $ defined by $ T_x(f) := f(x)$, for all $ f
\in C(X, A)$. For every $ a \in A$, we define $ 1_a \in C(X, A)$ by
the formula $1_a (x) := a$, for all $x \in X$. We notice that, every
homomorphism $\sigma \in Hom(A)$ induces a homomorphism $
\tilde{\sigma} \in Hom(C(X, A))$ defined by $\tilde{\sigma}(f)
:=\sigma f$, $f\in C(X, A)$. Conversely, for every homomorphism
$\tau \in Hom (C(X, A))$ and  $x \in X$, we introduce the map $
\tilde{\tau}_x: A\longrightarrow A $ through $ \tilde{\tau}_x(a) :=
T_x(\tau(1_{a}))$, $a \in A$. Since $1_{ab} = 1_a 1_b$,
$\tilde{\tau}_x \in Hom(A)$. It is readily seen that $(T_x
\tau)(1_a) = (\tilde{\tau}_x T_x) (1_a) $, for each $ a \in A$.
Next, for $f \in C(X)$ and $ a \in A$ we define $f a \in C(X, A)$
via $ f a(x) = f(x) a$ for each $x \in X$. Throughout the paper, we
keep the above definitions and notations.

In this paper, we deal with amenability-like properties of the
Banach algebra $C(X, A)$. Suppose that $\sigma$ and $\tau$ are
homomorphisms on $A$ and $C(X, A)$, respectively. For a given
$\sigma$-bounded approximate diagonal for $A$, we construct a $
\tilde{\sigma}$-bounded approximate diagonal for $C(X, A)$ (Theorem
2.1).  We show that under some certain conditions, $\tau$-(weak)
amenability of $C(X, A)$ implies $ \tilde{\tau}_x $-(weak)
amenability of $A$ (Theorems 2.4 and 3.4). For a commutative Banach
algebra $A$, we prove that $\sigma$-weak amenability of $A$ yields $
\tilde{\sigma}$-weak amenability of $C(X, A)$ in the presence of a
bounded approximate identity for $A$ (Theorem 3.2). Finally, we show
that Theorem 3.2 is still true without the existence of a bounded
approximate identity for $A$ (Theorem 3.9).

\section{$\sigma$-amenability}
Suppose that $A$ is a Banach algebra and $X$ is a compact Hausdorff
space. For $ u = \sum_{i} u_{i} \otimes v_{i} \in C(X) \hat{\otimes}
C(X)$ and $ \alpha =  \sum _{j} \alpha_{j} \otimes \beta_{j}\in
A\hat{\otimes} A$, we consider
$$\Gamma(u, \alpha)=  \sum _{i, j} u_{i}\alpha_{j} \otimes
v_{i}\beta{j} \in C(X, A) \hat{\otimes}C(X, A)$$ so that  $\Vert
\Gamma(u, \alpha ) \Vert \leq \Vert u \Vert \  \Vert  \alpha \Vert$.

\begin{theo}\label{2.1}
Let $X$ be a compact Hausdorff space, $A$ be a Banach algebra and
let $\sigma \in Hom(A)$. If $A$ has a $ \sigma $-bounded approximate
diagonal, then $ C(X, A) $ has a $\tilde{\sigma}$-bounded
approximate diagonal.

\end{theo}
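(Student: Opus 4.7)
The plan is to combine the given $\sig$-bounded approximate diagonal $(u_{\al})\subs A\hat{\ot}A$ of $A$ with an ordinary bounded approximate diagonal $(w_{\ga})\subs C(X)\hat{\ot}C(X)$ of $C(X)$ through the operator $\Gamma$ just introduced. Such a $(w_{\ga})$ exists because $C(X)$ is a commutative $C^{*}$-algebra, hence amenable. I would take as candidate
\[
v_{(\ga,\al)}:=\Gamma(w_{\ga},u_{\al})\in C(X,A)\hat{\ot}C(X,A),
\]
indexed by the product of the two directed sets; the stated norm inequality on $\Gamma$ makes this net bounded.

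The heart of the argument is a short bilinear identity obtained by direct computation on simple tensors in $C(X,A)\hat{\ot}C(X,A)$: for $f\in C(X)$, $a\in A$, $w\in C(X)\hat{\ot}C(X)$ and $u\in A\hat{\ot}A$,
\[
\tilde{\sig}(fa)=f\sig(a),\qquad \pi\bigl(\Gamma(w,u)\bigr)=\pi_{C(X)}(w)\,\pi_{A}(u),
\]
and, crucially,
\[
\tilde{\sig}(fa)\cdot\Gamma(w,u)-\Gamma(w,u)\cdot\tilde{\sig}(fa)
=\Gamma\bigl(fw,\sig(a)u-u\sig(a)\bigr)+\Gamma\bigl(fw-wf,u\sig(a)\bigr).
\]
Substituting $(w_{\ga},u_{\al})$ and invoking the norm bound $\|\Gamma(\cdot,\cdot)\|\le\|\cdot\|\,\|\cdot\|$, the first summand on the right tends to $0$ by the $\sig$-diagonal property of $(u_{\al})$ and the second by the diagonal property of $(w_{\ga})$; this delivers the commutator condition. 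For the approximate-unit condition, the factorisation
$\pi(v_{(\ga,\al)})\,\tilde{\sig}(fa)=(\pi_{C(X)}(w_{\ga})f)\,(\pi_{A}(u_{\al})\sig(a))$
combined with $\pi_{C(X)}(w_{\ga})f\to f$ in $C(X)$ and $\pi_{A}(u_{\al})\sig(a)\to\sig(a)$ in $A$, each factor being bounded, yields the desired limit.

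To pass from elementary $F=fa$ to arbitrary $F\in C(X,A)$ I would invoke the uniform density of $\spa\{fa:f\in C(X),\,a\in A\}$ in $C(X,A)$ (a partition-of-unity argument). Since both conditions depend linearly on $F$, they extend immediately by linearity to finite sums $\sum_{k}f_{k}a_{k}$; then, since $\tilde{\sig}$ is continuous and $(v_{(\ga,\al)})$ is uniformly bounded, a standard $\eps/3$ estimate lifts the two convergences from this dense subset to the whole of $C(X,A)$.

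The main obstacle is the commutator identity in the second paragraph: one has to organise the tensor algebra so that one $\Gamma$-term isolates precisely the $\sig$-commutator inside $A$ (absorbed by $(u_{\al})$) and the other isolates precisely the ordinary commutator inside $C(X)$ (absorbed by $(w_{\ga})$). Once this identity is in place, the remainder is routine bookkeeping with the sub-multiplicativity of $\Gamma$ and the density of $C(X)\ot A$ in $C(X,A)$.
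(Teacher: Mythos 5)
Your proposal is correct and follows essentially the same route as the paper: both combine a $\sigma$-bounded approximate diagonal of $A$ with a bounded approximate diagonal of $C(X)$ via $\Gamma$, use exactly the same commutator decomposition $\Gamma\bigl(fw,\sigma(a)u-u\sigma(a)\bigr)+\Gamma\bigl(fw-wf,u\sigma(a)\bigr)$, and finish by the density of $\operatorname{span}\{fa\}$ together with boundedness of the net. The only (harmless) difference is that you invoke amenability of $C(X)$ abstractly and index by the product directed set, handling $\pi(w_\gamma)\approx 1$ approximately, whereas the paper uses the explicit elements $u\in C(X)\otimes C(X)$ with $\pi(u)=1$ from [2, Corollary 1.2] and an $(F,\varepsilon)$-indexed net.
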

\begin{proof} We follow the standard argument in \cite{R.Y}. Let $(\alpha_{v}) \in A \hat{\otimes} A$ be a $\sigma$-bounded
approximate diagonal
  for $ A$ such that $\Vert \alpha_{v} \Vert_{p}\leq M$ for all $v$.
  We claim that for any $\varepsilon > 0$
   and any finite set $ F \subset C(X, A)$, there is $ U= U_{(F, \varepsilon)}\in C(X, A) \hat{\otimes}C(X, A)$ with $\Vert U \Vert \leq 2Mc$ such that
 $$  \Vert \tilde{\sigma}(a) \ . \ U - U \ . \ \tilde{\sigma}(a)  \Vert< \varepsilon \ \ \ \text{and} \ \ \ \Vert  \pi(U)
  \tilde{\sigma}(a)-\tilde{\sigma}(a)\Vert < \varepsilon
  \ \ \ (a \in F) $$ where $c > 0$
  is the constant asserted in [2, Corollary 1.2]. Given $\varepsilon > 0$ and a finite set $ F \subset  C(X, A)$.
  We first assume that each $a \in F$ is of the form of a finite sum
  $a =  \sum_{k} f_{k} a_{k} $, with $ f_{k}\in C(X) $ and $a_{k} \in
  A$. It is easy to check that $\tilde{\sigma}(a) =   \sum_{k} f_{k} \sigma(a_{k}) $. We denote by $ F_{A} $ $( \subset A)$ the finite set of all
  elements $a_{k}$ associated to $a$ for all $a \in F$, and by $F_{C}$ $(\subset
  C(X))$ the finite set of all functions $f_{k}$ associated to $a$ for all $a \in
  F$.  Let $N> 0$ be an integer that is greater than the number of the terms of  $a =  \sum_{k} f_{k} a_{k} $
  for all $a \in F$, and set $ L := \max \{   \Vert \sigma(b) \Vert \ , \ \Vert f \Vert \ : \ b \in F_{A} ,  f\in
  F_{C}\}$. By the assumption,  there is $\alpha \in (\alpha_{v})$ such that
  $$ \Vert \sigma(b) \ . \ \alpha-\alpha \ . \ \sigma(b) \Vert < \dfrac{\varepsilon}{4cNL}, \ \ \  \Vert \pi (\alpha) \ . \ \sigma(b) -
 \sigma(b) \Vert < \dfrac{\varepsilon}{NL} \ \ \ (b \in F_{A}) .$$  On the other hand,
  by the same argument as in the proof of [2, Theorem 2.1], we obtain an element $u \in C(X) \otimes C(X) $ with
   $\pi(u) =1$ and $\Vert  u \Vert \leq 2c$ for which
 $$\Vert f \ . \ u - u \ . \ f \Vert  <
  \dfrac{\varepsilon}{2 \Vert \alpha \Vert LN}        \ \ \ (f \in
  F_{C}).$$ Putting $U:=\Gamma(u, \alpha) $, for $a =
 \sum_{k} f_{k} a_{k}$ we see that
\begin{align*}
\Vert \tilde{\sigma}(a) \ . \ U - U \ . \ \tilde{\sigma}(a)\Vert &=
\Vert \sum _{k}(\Gamma(f_{k}u, \sigma(a_{k})\alpha)-\Gamma(uf_{k},
\alpha \sigma(a_{k})) \Vert \\&= \Vert  \sum_{k}\Gamma(f_{k}u,
\sigma(a_{k}) \alpha - \alpha \sigma(a_{k})) + \Gamma(f_{k}u-uf_{k},
\alpha \sigma(a_{k}) )\Vert \\& \leq  \sum_{k}(L \Vert u\Vert \
\Vert \sigma(a_{k})\alpha -\alpha \sigma(a_{k})\Vert +L \Vert\alpha
\Vert \  \Vert f_{k}u-uf_{k}\Vert) \\& \leq NL(2c
 \dfrac{\varepsilon}{4cNL}+ \Vert \alpha\Vert
  \dfrac{\varepsilon}{2\Vert  \alpha
   \Vert_{P} LN})=  \varepsilon
\end{align*}
and
\begin{align*}
\Vert  \pi(U) \tilde{\sigma}(a)-\tilde{\sigma}(a)\Vert &= \Vert
\pi(u) \pi(\alpha) \tilde{\sigma}(a)-\tilde{\sigma}(a)\Vert = \Vert
\sum_{k} f_{k}(\pi(\alpha) \sigma(a_{k})-\sigma(a_{k})) \Vert \\&
\leq L \sum _{k} \Vert \pi(\alpha) \sigma(a_{k})-\sigma(a_{k})\Vert
< NL  \dfrac{\varepsilon}{NL} = \varepsilon.
\end{align*}

Now, we assume that $ F \subset C(X, A)$ is an arbitrary finite set.
From the proof of [2, Theorem 2.1], we know that for each $ a \in F
$ there exists an element $a_{\varepsilon} =  \sum_{k} f_{k} a_{k} $
where the right side of $ a_{\varepsilon}$ is a finite sum, $
f_{k}\in C(X)$ and $ a_{k}\in A$ such that $ \Vert \tilde
{\sigma}(a)-\tilde{\sigma}(a_{\varepsilon}) \Vert <  \min \big
\lbrace \dfrac{\varepsilon}{4}, \dfrac{\varepsilon}{8Mc} \big
\rbrace $. Applying the above argument for the finite set $
F_{\varepsilon} := \lbrace a_{\varepsilon}   : a \in F \rbrace $, we
get $ U \in C(X, A) \hat{\otimes} C(X, A)$ such that $ \Vert U \Vert
\leq 2cM $ and $$\Vert  \tilde{\sigma}(a_{\varepsilon}) \ . \ U - U
\ . \ \tilde{\sigma}(a_{\varepsilon})\Vert < \dfrac{\varepsilon}{2}
\ \ \ \text{and} \ \ \ \Vert  \pi(U) \tilde{\sigma}(a_{\varepsilon})
- \tilde{\sigma}(a_{\varepsilon})\Vert < \dfrac{\varepsilon}{2}  \ \
\ ( a_{\varepsilon} \in F_{\varepsilon}).
$$ Therefore $$\Vert  \tilde{\sigma}(a) \ . \ U - U \ . \
\tilde{\sigma}(a)\Vert < \varepsilon \ \ \ \text{and} \ \ \ \Vert
\pi(U) \tilde{\sigma}(a) - \tilde{\sigma}(a)\Vert < \varepsilon  \ \
\ ( a \in F)
$$ so that the claim is proved. Finally, the net $(U_{F, \varepsilon})$ with the natural
    partial order $(F_{1}, \varepsilon_{1}) < (F_{2}, \varepsilon_{2})$ if and only if
    $ F_{1}\subset F_{2}$ and $\varepsilon_{1} \geq \varepsilon_{2}
    $ is the desired  $\tilde{\sigma}$-approximate diagonal for $C(X,
    A)$.
\end{proof}
\begin{rem} \label{2.2}
To our knowledge, we do not know whether or not the existence of
$\sigma$-bounded approximate diagonal is equivalent to
$\sigma$-amenability. Hence, we can not prove or disprove if
$\sigma$-amenability of $A$ implies $\tilde{\sigma}$-amenability of
$C(X,   A)$.
\end{rem}
\begin{prop}
Suppose that $\sigma \in Hom(A)$ and $\tau \in Hom(B)$, where $A$
and $B$ are Banach algebras. Suppose that $ \phi : A \longrightarrow
B$ is a continuous homomorphism with a dense range and $ \tau \phi =
\phi \sigma$. If $A$ is $\sigma$-amenable, then $B$ is
$\tau$-amenable.
\end{prop}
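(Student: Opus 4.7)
The plan is to pull a $\tau$-derivation on $B$ back to a $\sigma$-derivation on $A$ via $\phi$, apply $\sigma$-amenability of $A$ to implement it via an element of the dual module, and then use density of $\phi(A)$ to transfer this implementation back to $B$.

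In detail, let $E$ be an arbitrary Banach $B$-bimodule and let $D\colon B \to E^*$ be a continuous $\tau$-derivation; I want to show $D$ is $\tau$-inner. First, I would make $E$ into a Banach $A$-bimodule by setting $a\cdot e := \phi(a)\cdot e$ and $e\cdot a := e\cdot \phi(a)$; boundedness of $\phi$ ensures this is a genuine Banach bimodule structure, and the dual $A$-bimodule structure on $E^*$ is then the obvious one. Next, I would form the composition $\widetilde{D} := D\circ \phi \colon A \to E^*$ and verify it is a $\sigma$-derivation: for $a,b\in A$,
\begin{align*}
\widetilde{D}(ab) &= D(\phi(a)\phi(b)) = D(\phi(a))\cdot \tau(\phi(b)) + \tau(\phi(a))\cdot D(\phi(b))\\
&= D(\phi(a))\cdot \phi(\sigma(b)) + \phi(\sigma(a))\cdot D(\phi(b))\\
&= \widetilde{D}(a)\cdot \sigma(b) + \sigma(a)\cdot \widetilde{D}(b),
\end{align*}
where the intertwining hypothesis $\tau\phi=\phi\sigma$ is used in the middle line, and the last line uses the pulled-back action. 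So $\widetilde{D}$ is a $\sigma$-derivation.

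By $\sigma$-amenability of $A$, there exists $x\in E^*$ with $\widetilde{D}(a) = \sigma(a)\cdot x - x\cdot \sigma(a)$ for every $a\in A$. Unwinding the pulled-back actions, this says
\[
D(\phi(a)) = \phi(\sigma(a))\cdot x - x\cdot \phi(\sigma(a)) = \tau(\phi(a))\cdot x - x\cdot \tau(\phi(a)) \qquad (a\in A).
\]
Thus $D$ and the $\tau$-inner derivation $b\mapsto \tau(b)\cdot x - x\cdot \tau(b)$ agree on the dense subalgebra $\phi(A)$ of $B$. Since $D$, $\tau$, and the left/right multiplications by $x$ on $E^*$ are all norm-continuous in the algebra variable, both sides are norm-continuous functions of $b\in B$, so they agree on all of $B$. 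Hence $D$ is $\tau$-inner, and $B$ is $\tau$-amenable.

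The computations are straightforward; the only point that requires a moment's care is checking that after pulling back via $\phi$, the identity $\widetilde{D}(ab)=\widetilde{D}(a)\cdot\sigma(b)+\sigma(a)\cdot\widetilde{D}(b)$ really does come out with $\sigma$ and not $\tau$ on the right-hand side, which is precisely what the hypothesis $\tau\phi = \phi\sigma$ is there to guarantee. The density of $\phi(A)$ is then exactly what is needed to promote the implementing element $x$ from an implementation on $\phi(A)$ to one on all of $B$.
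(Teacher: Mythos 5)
Your proof is correct: pulling the $\tau$-derivation back along $\phi$ to a $\sigma$-derivation into the dual of the $\phi$-induced $A$-bimodule, implementing it by $\sigma$-amenability, and extending the resulting inner implementation from the dense subalgebra $\phi(A)$ to all of $B$ by continuity is exactly the standard argument. The paper itself gives no proof, merely citing Proposition 3.3 of Moslehian--Motlagh, and your argument is precisely the one that reference (and the paper's remark ``we may either prove it'') has in mind, so there is nothing to add.
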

\begin{proof}
We may either prove it or else look at \cite [Proposition 3.3]{M.
A}.
\end{proof}
\begin{theo}
 Let $X$ be a compact Hausdorff space, let $A$ be a Banach algebra
and let $\tau \in Hom(C(X, A))$ such that  $T_{x_0} \tau =
\tilde{\tau}_{x_0} T_{x_0}$, for some $x_0 \in X$. If $C(X, A)$ is
$\tau$-amenable, then $A$ is $\tilde{\tau}_{x_0}$-amenable.
\end{theo}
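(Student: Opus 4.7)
The plan is to prove this by a standard pull-back/push-down argument, using the epimorphism $T_{x_0}: C(X,A)\to A$ to transfer a $\tilde{\tau}_{x_0}$-derivation on $A$ into a $\tau$-derivation on $C(X,A)$, apply the hypothesis, and then read off innerness on $A$.

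First I would start with an arbitrary Banach $A$-bimodule $E$ and a $\tilde{\tau}_{x_0}$-derivation $D: A \longrightarrow E^*$. I would turn $E$ into a Banach $C(X,A)$-bimodule through $T_{x_0}$, by setting $f \cdot e := f(x_0)\cdot e$ and $e\cdot f := e\cdot f(x_0)$ for $f\in C(X,A)$ and $e\in E$; this is clearly contractive because $\|f(x_0)\|\le \|f\|_\infty$, and it dualizes to a Banach $C(X,A)$-bimodule structure on $E^*$. Next I would define
$$
\tilde D : C(X,A) \longrightarrow E^*, \qquad \tilde D(f) := D\bigl(T_{x_0}(f)\bigr) = D\bigl(f(x_0)\bigr),
$$
which is bounded since $D$ and $T_{x_0}$ are.

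The key verification is that $\tilde D$ is a $\tau$-derivation. For $f,g \in C(X,A)$, expand $\tilde D(fg) = D\bigl(f(x_0)g(x_0)\bigr)$ using the fact that $D$ is a $\tilde{\tau}_{x_0}$-derivation, and rewrite each factor $\tilde{\tau}_{x_0}(f(x_0)) = \tilde{\tau}_{x_0}(T_{x_0}(f))$ as $T_{x_0}(\tau(f))=\tau(f)(x_0)$ by the hypothesis $T_{x_0}\tau = \tilde{\tau}_{x_0}T_{x_0}$. Unwinding the module actions defined above, this is exactly $\tilde D(f)\cdot \tau(g) + \tau(f)\cdot \tilde D(g)$, as required.

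Having shown that $\tilde D$ is a $\tau$-derivation into the dual bimodule $E^*$, the assumption that $C(X,A)$ is $\tau$-amenable yields $\xi \in E^*$ with $\tilde D(f) = \tau(f)\cdot \xi - \xi\cdot \tau(f)$ for all $f\in C(X,A)$. Finally I would specialize $f = 1_a$ (so $f(x_0)=a$), giving
$$
D(a) = \tilde{\tau}_{x_0}(a)\cdot \xi - \xi\cdot \tilde{\tau}_{x_0}(a) \qquad (a\in A),
$$
which shows $D$ is $\tilde{\tau}_{x_0}$-inner and completes the proof. The only real subtlety is step three, where one has to invoke the compatibility $T_{x_0}\tau = \tilde{\tau}_{x_0}T_{x_0}$ in just the right place to match the $\tau$-derivation identity; everything else is bookkeeping.
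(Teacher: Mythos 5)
Your argument is correct and is essentially the route the paper takes: the paper simply invokes its Proposition 2.3 (a $\sigma$-amenability transfer result along a continuous homomorphism with dense range satisfying $\tau\phi=\phi\sigma$, applied to the surjection $T_{x_0}$), and your construction --- making $E$ a $C(X,A)$-bimodule through $T_{x_0}$, pulling the $\tilde{\tau}_{x_0}$-derivation back to the $\tau$-derivation $D\circ T_{x_0}$, and reading off innerness via $1_a$ --- is exactly the standard proof of that proposition specialized to this setting. No gaps.
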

\begin{proof}
We have already seen that the map $T_{x_0}$ is surjective, so this
is immediate by Proposition 2.3.
\end{proof}
We note that homomorphisms $\tau \in Hom(C(X, A))$ satisfying the
condition of Theorem 2.4 exist in abundance. Take an arbitrary
homomorphism $\eta \in Hom(C(X, A))$ and define the map $ \tau :
C(X, A) \longrightarrow C(X, A)$ by $$ \tau(f)(x) := \tau
(1_{f(x)})(x) := \eta (1_{f(x)})(x) \ \ \  ( f \in C(X, A), x \in X)
\ .
$$ Then, we may check that $\tau \in Hom(C(X, A))$ and $T_x \tau =
\tilde{\tau}_x T_x$ for all $x \in X$.

\section{$\sigma$-weak amenability}
For a Banach algebra $A$ and a homomorphism $\sigma$ on $A$, we
write $Z^{1}_{\sigma}(A,E)$ for the set of all $\sigma$-derivations
from $A$ into a Banach $A$-bimodule $E$.

Let $A$ be a $\sigma$-weakly amenable Banach algebra, and $ \sigma
\in Hom(A)$ such that $ \sigma (A)$ is a dense subset of $A$. A more
or less verbatim of the classic argument, shows that $
\overline{A^{2}}=A$ (see [6, Theorem 6] for details).

\begin{prop} \label{3.4}
Let $A$ be a commutative Banach algebra and let $ \sigma \in Hom(A)$
with a dense range. If $A$ is $\sigma$-weakly amenable, then
$Z^{1}_{\sigma}(A,E)={0}$ for each Banach $A$-module $E$.
\end{prop}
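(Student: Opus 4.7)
The plan is to reduce an arbitrary $\sigma$-derivation $D:A\lo E$ to a $\sigma$-derivation landing in $A^*$, where the hypothesis of $\sigma$-weak amenability can be applied directly. Since $A$ is commutative, I treat $E$ as a symmetric Banach $A$-bimodule, so that $a\cdot x=x\cdot a$ for $a\in A$ and $x\in E$; this ensures that $A^*$ is also symmetric.

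For the first (and central) step, given $D\in Z^1_\sigma(A,E)$ and $\phi\in E^*$, I would set
$$\tilde D_\phi:A\lo A^*,\qquad \tilde D_\phi(a)(c):=\phi\bigl(D(a)\cdot c\bigr).$$
This is plainly bounded and linear. The key calculation is to check that $\tilde D_\phi$ is a $\sigma$-derivation: expanding $\tilde D_\phi(ab)(c)=\phi(D(ab)\cdot c)$ with the $\sigma$-derivation identity gives two summands $\phi(D(a)\cdot\sigma(b)c)$ and $\phi(\sigma(a)\cdot D(b)\cdot c)$; commutativity of $A$ together with symmetry of the action lets me rewrite these as $\tilde D_\phi(a)(\sigma(b)c)$ and $\tilde D_\phi(b)(c\sigma(a))$, which are precisely $(\tilde D_\phi(a)\cdot\sigma(b))(c)$ and $(\sigma(a)\cdot\tilde D_\phi(b))(c)$ in the standard $A^*$-bimodule structure. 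Hence $\tilde D_\phi\in Z^1_\sigma(A,A^*)$.

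By $\sigma$-weak amenability, there is $\psi\in A^*$ with $\tilde D_\phi(a)=\sigma(a)\cdot\psi-\psi\cdot\sigma(a)$ for every $a\in A$; but since $A$ is commutative, the bimodule $A^*$ is symmetric, so this difference vanishes identically. Thus $\phi(D(a)\cdot c)=0$ for all $a,c\in A$ and all $\phi\in E^*$, and Hahn--Banach yields $D(a)\cdot c=0$ in $E$ for all $a,c\in A$. Plugging this back into the $\sigma$-derivation identity gives $D(ab)=D(a)\cdot\sigma(b)+\sigma(a)\cdot D(b)=0$ for every $a,b\in A$, i.e.\ $D$ vanishes on $A^2$. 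The paragraph preceding the statement records that under the present hypotheses ($\sigma$-weak amenability with $\overline{\sigma(A)}=A$) one has $\overline{A^2}=A$, so continuity of $D$ forces $D\equiv 0$.

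The main obstacle is precisely the bimodule bookkeeping in the first step: one has to convert the two-sided action on $E$ into the correct one-sided actions in $A^*$, and this only works because of commutativity of $A$ (used twice: once to commute elements inside $A$, and once to see that $A^*$ is a symmetric bimodule so that the $\sigma$-inner derivation trivially vanishes). Once this is in hand, the remaining two steps are short, relying only on Hahn--Banach and the factorization $\overline{A^2}=A$ cited immediately before the statement.
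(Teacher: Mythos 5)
Your proof is correct and takes essentially the same route as the paper: your $\tilde D_\phi$ is exactly the composition $R_\phi\circ D$ with the module morphism $R_\phi\colon E\longrightarrow A^{*}$, $\langle c, R_\phi x\rangle=\langle c\cdot x,\phi\rangle$, that the paper uses, and both arguments hinge on commutativity of $A$ making $\sigma$-inner derivations into $A^{*}$ vanish. The only difference is organizational: the paper argues by contradiction and invokes $\overline{A^{2}}=A$ at the start to produce $a_{0}$ with $D(a_{0}^{2})\neq 0$, while you argue directly for every $\phi\in E^{*}$ and use $\overline{A^{2}}=A$ together with continuity at the end.
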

\begin{proof}
Assume towards a contradiction that there is a nonzero element $D
\in Z^{1}_{\sigma}(A,E)$. Since $ \overline{A^{2}}=A$, there exists
$a_{0}\in A$ with $D(a_{0}^{2})\neq 0$. Hence $\sigma(a_{0}) \cdot
Da_{0}\neq 0$ and so there exists $\lambda \in E^{*}$ with $\langle
\sigma(a_{0}) \cdot Da_{0}, \lambda \rangle =1$. Consider
$R_{\lambda}\in {}_{A}{B(E,A^{*})}$ such that $\langle a,
R_{\lambda}x\rangle = \langle a \cdot x, \lambda\rangle$, for $ a
\in A$ and $ x \in E$ \cite[Proposition 2.6.6]{H. G}. It is not hard
to see that $R_{\lambda} \circ D \in Z^{1}_{\sigma}(A, A^{*})$. Then
we have $$ \langle \sigma(a_{0}), (R_{\lambda} \circ
D)(a_{0})\rangle = \langle \sigma(a_{0})\cdot Da_{0}, \lambda
\rangle =1  $$ so that $R_{\lambda} \circ D \neq0$, a contradiction
of the fact that $A$ is $\sigma$-weakly amenable.
\end{proof}

\begin{prop}\label{3.2}
Let $X$ be a compact Hausdorff space and let $A$ be a commutative
Banach algebra with a bounded net $(e_{v})$ for which $
(\sigma(e_{v}))$ is a bounded approximate identity for $A$, where $
\sigma$ belongs to $ Hom(A)$ with a dense range. If $A$ is
$\sigma$-weakly amenable, then $C(X, A)$ is $\tilde{\sigma}$-weakly
amenable.
\end{prop}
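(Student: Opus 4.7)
The goal is to show every $\tilde{\sigma}$-derivation $D\colon C(X,A)\to C(X,A)^*$ vanishes identically; this suffices because commutativity of $A$ makes $C(X,A)$ commutative, hence $C(X,A)^*$ is a symmetric bimodule and every $\tilde{\sigma}$-inner derivation into it is automatically zero. As a first reduction, the map $\iota\colon A\to C(X,A)$, $a\mapsto 1_a$, is an algebra homomorphism satisfying $\tilde{\sigma}\iota=\iota\sigma$. Viewing $C(X,A)^*$ as a Banach $A$-bimodule through $\iota$, the composition $D\circ\iota\colon A\to C(X,A)^*$ becomes a $\sigma$-derivation, which Proposition~\ref{3.4} (applicable because $A$ is commutative, $\sigma$-weakly amenable, and $\sigma$ has dense range) kills. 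Thus $D(1_a)=0$ for every $a\in A$, and substituting into the derivation identity yields the key formula $D(g\cdot 1_b)=D(g)\cdot 1_{\sigma(b)}$ for all $g\in C(X,A)$ and $b\in A$.

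Since finite sums $\sum_k f_k a_k$ with $f_k\in C(X)$ and $a_k\in A$ are norm-dense in $C(X,A)$ (the vector-valued Stone--Weierstrass fact used in the proof of Theorem~\ref{2.1}), continuity of $D$ reduces the problem to showing $D(fa)=0$ for arbitrary $f\in C(X)$ and $a\in A$. To exploit $\sigma$-weak amenability of $A$, the plan is to extract, for each fixed $f$, a $\sigma$-derivation from $A$ into some Banach $A$-bimodule $E_f$ whose vanishing encodes $D(fa)=0$. The obvious candidate $\tilde{D}_f(a):=D(fa)$ is \emph{not} a $\sigma$-derivation; instead, applying the previous identity together with the two decompositions $f(ab)=(fa)\cdot 1_b=1_a\cdot(fb)$, one verifies $\tilde{D}_f(ab)=\tilde{D}_f(a)\cdot 1_{\sigma(b)}=1_{\sigma(a)}\cdot\tilde{D}_f(b)$, so $\tilde{D}_f$ satisfies only a ``half'' derivation identity.

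The main obstacle is converting this skew identity into a genuine $\sigma$-derivation to which Proposition~\ref{3.4} can be applied. The bounded-approximate-identity hypothesis should enter here decisively: setting $b=\sigma(e_v)$ in the key formula gives $D(g)\cdot 1_{\sigma^2(e_v)}=D(g\cdot 1_{\sigma(e_v)})\to D(g)$ in norm (since $\sigma(e_v)$ is a bounded approximate identity, acting uniformly on the compact image $g(X)\subset A$), so the right action of $1_{\sigma^2(e_v)}$ behaves like a norm-approximate unit on the image of $D$. Combined with the density of $\sigma(A)$ in $A$---which, by a partition-of-unity argument on the compact space $X$, makes $\tilde{\sigma}(C(X,A))$ norm-dense in $C(X,A)$---this should let one represent $\tilde{D}_f(a)=\sigma(a)\cdot\psi$ for some $\psi\in C(X,A)^*$ arising as a weak-$*$ cluster point of $\tilde{D}_f(\sigma(e_v))$, and then reduce the vanishing of $\psi$ to that of a genuine $\sigma$-derivation $A\to A^*$ handled by an analogue of the $R_\lambda$-trick from the proof of Proposition~\ref{3.4}.
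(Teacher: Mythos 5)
Your first half is sound and in fact coincides with the paper's opening moves: restricting $D$ to the copy of $A$ given by $a\mapsto 1_a$ produces a $\sigma$-derivation into $C(X,A)^*$ which Proposition \ref{3.4} kills, and the resulting identity $D(g\cdot 1_b)=D(g)\cdot 1_{\sigma(b)}$ together with the density of $\operatorname{span}\{fa: f\in C(X),\,a\in A\}$ reduces everything to showing $D(fa)=0$. The observation that $g\cdot 1_{\sigma(e_v)}\to g$ in norm (compactness of $g(X)$) is also correct and is exactly how the bounded approximate identity enters.

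The gap is in your final paragraph, i.e.\ precisely at the point you yourself label ``the main obstacle''. For fixed $f$ the map $a\mapsto \tilde D_f(a)=D(fa)$ is, as you computed, only a $\sigma$-twisted \emph{module map} ($\tilde D_f(ab)=\tilde D_f(a)\cdot 1_{\sigma(b)}$), and $\sigma$-weak amenability of $A$ says nothing about module maps: for any $\psi\in C(X,A)^*$ the map $a\mapsto\psi\cdot 1_{\sigma(a)}$ satisfies the same identity and is generally nonzero. Your weak-$*$ cluster point argument does yield a representation $D(fa)=\psi_f\cdot 1_{\sigma(a)}$ with $\psi_f$ a cluster point of $\bigl(D(f\sigma(e_v))\bigr)_v$, but that representation by itself cannot force $D(fa)=0$, and there is no ``genuine $\sigma$-derivation $A\to A^*$'' in sight to which an $R_\lambda$-type argument could be applied: the derivation structure does not live in the $A$-variable at all. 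It lives in the $C(X)$-variable, and this is the paper's decisive step that your sketch is missing: one defines $\tilde D(f):=wk^*\text{-}\lim_v D(f\sigma(e_v))$ (your $\psi_f$), verifies from the product rule $(fa)(gb)=(fg)(ab)$ that $f\mapsto\tilde D(f)$ is a derivation of $C(X)$ into the commutative $C(X)$-module $C(X,A)^*$, and then invokes the amenability of $C(X)$ (plus symmetry of the module) to conclude $\tilde D=0$; only then does $D(f\sigma(a))=\tilde D(f)\cdot 1_{\sigma^2(a)}=0$ follow, and density of $\sigma(A)$ finishes. Without bringing in the amenability of $C(X)$ (or some substitute controlling the $f$-dependence), your argument cannot close.
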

\begin{proof}
Using the map $ a \longmapsto 1_a$, we may consider $A$ as a closed
subalgebra of the commutative Banach algebra $C(X,A)$. Suppose that
$ D : C(X, A) \longrightarrow C(X, A)^{*}$ is a
$\tilde{\sigma}$-derivation. Notice that $C(X,A)$ is naturally a
commutative $A$-bimodule with actions $a \ . \ f (x) = f \ . \ a (x)
:= a f(x)$, ($a \in A, f \in C(X,A), x \in X$). We also note that
$\tilde{\sigma} = \sigma $ on $A$. Therefore $ D\mid _{A} : A
\longrightarrow C(X, A)^{*} $ is a $\sigma$-derivation and then, by
Proposition 3.1, $D\mid _{A} = 0 $. On the other hand, $
(\sigma(e_{v}))$ is also a bounded approximate identity for $C(X,
A)$ and $ wk^{*}$-$\lim D(\sigma(e_{v})) = 0$. An argument similar
to that in the proof of [2, Proposition 4.1] shows that $
wk^{*}$-$\lim  D(f \sigma(e_{v}))$ exists for each $ f \in C(X)$. So
we may define $\tilde{D} : C(X) \longrightarrow C(X,A)^{*}$ via $
\tilde{D}(f) = wk^{*}-\lim_{v} D(f\sigma(e_{v}))$. Clearly $C(X,A)$
is a commutative $ C(X)$-bimodule. Then
\begin{align*}
D(fg\sigma(e_{v})) &= wk^{*}-\lim D(f g \sigma(e_{\mu})
\sigma(e_{v}))) = wk^{*}-\lim D(f\sigma(e_{\mu})g\sigma(e_{v})))
\\&= wk^{*}-\lim D(f\sigma(e_{\mu})) \ . \ \tilde{\sigma}(g\sigma(e_{v})
) + \tilde{\sigma} (f\sigma(e_{\mu})) \ . \ D(g\sigma(e_{v})) \\&=
\tilde{D}(f) \ . \ \tilde{\sigma}(g\sigma(e_{v}) ) + f \ . \
D(g\sigma(e_{v}))
\end{align*}
for $f,g \in C(X)$. Taking $wk^{*}$-limit in $v$, we get $$
\tilde{D}(fg) =  \tilde{D}(f) \ . \ g + f \ . \ \tilde{D}(g) \ \ \
(f,g \in C(X) ) .$$  Hence $\tilde{D}$ is a derivation on amenable
$C(X)$. Therefore $\tilde{D} = 0$, and then
   $$D(f\sigma(a))= \tilde{D}(f)\cdot {\sigma}(a) +f\cdot \tilde{D}(\sigma(a)) = 0  \ \ \ (a \in A, f \in C(X)).$$ Whence $D = 0$ on the linear span of
    $ \lbrace f\sigma(a)    : f \in C(X),  a \in A\rbrace $
    which is dense in the linear span of $ \lbrace f a    : f \in C(X),  a \in A \rbrace
    $, by the density of range of $ \sigma$. The latter is itself
    dense in $ C(X, A)$ by [2], so that $D = 0$ on the whole $C(X,
    A)$.
\end{proof}
In Proposition 3.2, as an special case, we may suppose that
$(e_{v})$ is itself a bounded approximate identity for $A$. Indeed
if $(e_{v})$ is a bounded approximate identity for $A$, then the
density of the range of $\sigma$ shows that $(\sigma(e_{v}))$ is
still a bounded approximate identity for $A$.

The following was proved in \cite [Proposition 18]{T. I}, and so we
omit its proof.
\begin{prop}
Suppose that $\sigma \in Hom(A)$ and $\tau \in Hom(B)$, where $A$
and $B$ are Banach algebras such that $A$ is commutative and
$\sigma$ has a dense range. Suppose that $ \phi : A \longrightarrow
B$ is a continuous epimorphism for which $ \tau \phi = \phi \sigma$.
If $A$ is $\sigma$-weakly amenable, then $B$ is $\tau$-weakly
amenable.
\end{prop}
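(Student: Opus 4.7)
The plan is to pull back an arbitrary $\tau$-derivation from $B$ into $B^{*}$ to a $\sigma$-derivation on $A$ and then invoke Proposition~3.1 to force it to vanish. Take any $D\in Z^{1}_{\tau}(B,B^{*})$. First I would regard $B^{*}$ as a Banach $A$-bimodule via $\phi$, namely $a\cdot\lambda:=\phi(a)\cdot\lambda$ and $\lambda\cdot a:=\lambda\cdot\phi(a)$ for $a\in A$ and $\lambda\in B^{*}$; this is an honest bounded bimodule action because $\phi$ is a continuous homomorphism and $B^{*}$ is already a Banach $B$-bimodule. Then I would set $\tilde{D}:=D\circ\phi:A\longrightarrow B^{*}$.

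The central computation is to verify that $\tilde{D}$ is a $\sigma$-derivation. For $a,b\in A$,
\begin{align*}
\tilde{D}(ab) &= D(\phi(a)\phi(b)) = D(\phi(a))\cdot\tau(\phi(b)) + \tau(\phi(a))\cdot D(\phi(b))\\
&= D(\phi(a))\cdot\phi(\sigma(b)) + \phi(\sigma(a))\cdot D(\phi(b))\\
&= \tilde{D}(a)\cdot\sigma(b) + \sigma(a)\cdot\tilde{D}(b),
\end{align*}
where the second equality is precisely the point at which the intertwining hypothesis $\tau\phi=\phi\sigma$ is inserted, and the third expresses everything in terms of the pulled-back $A$-action on $B^{*}$. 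Hence $\tilde{D}\in Z^{1}_{\sigma}(A,B^{*})$. Since $A$ is a commutative $\sigma$-weakly amenable Banach algebra with $\sigma$ of dense range, Proposition~3.1 applies and yields $\tilde{D}=0$, i.e.\ $D$ vanishes on the range of $\phi$. Surjectivity of $\phi$ (as the word \textit{epimorphism} is used in the paper, cf.\ the treatment of $T_{x_{0}}$ in Theorem~2.4) then forces $D=0$ on all of $B$; in particular $D$ is $\tau$-inner, being implemented by $0\in B^{*}$.

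The only step requiring genuine care is the Leibniz-rule computation above, because the intertwining condition $\tau\phi=\phi\sigma$ has to be invoked at exactly the right place for the $\tau$-derivation identity on $B$ to translate cleanly into the $\sigma$-derivation identity on $A$ against the pulled-back bimodule structure. Everything else is formal: once $\tilde{D}$ is recognised as a $\sigma$-derivation into a Banach $A$-bimodule, Proposition~3.1 gives outright vanishing (not merely inner-ness), so no implementing element needs to be constructed and no approximation or limit argument is needed to finish.
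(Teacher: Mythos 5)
Your argument is correct, but note that the paper does not actually prove this proposition: it simply cites \cite[Proposition 18]{T. I} and omits the proof. What you supply is a short self-contained argument: pull back $D\in Z^{1}_{\tau}(B,B^{*})$ along $\phi$, check via $\tau\phi=\phi\sigma$ that $D\circ\phi$ is a $\sigma$-derivation into $B^{*}$ with the module action transported through $\phi$, and then kill it with Proposition~3.1; surjectivity of $\phi$ then gives $D=0$, hence trivially $\tau$-inner. This is exactly the kind of transfer argument the cited reference encapsulates, so nothing is lost, and your version has the advantage of staying entirely inside the paper's own toolkit (Proposition~3.1) rather than importing an external result. One small point worth making explicit: Proposition~3.1 is stated (and used elsewhere in the paper, e.g.\ in Proposition~3.2 and Lemma~3.6) for Banach $A$-\emph{modules} in the symmetric sense appropriate to commutative algebras, so you should observe that $B=\phi(A)$ is itself commutative (being the image of the commutative algebra $A$ under the surjective homomorphism $\phi$), whence $B^{*}$ with its canonical actions is a symmetric $B$-module and therefore, via $\phi$, a symmetric Banach $A$-module; with that remark your application of Proposition~3.1 is fully justified. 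Incidentally, your argument only uses surjectivity at the last step to conclude $D=0$ on all of $B$; a dense range for $\phi$ would do just as well by continuity of $D$, so your proof is in fact marginally more general than the statement requires.
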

The following is the converse of Proposition 3.2.
\begin{theo}
 Let $X$ be a compact Hausdorff space, let $A$ be a commutative Banach algebra
and let $\tau \in Hom(C(X, A))$ with a dense range such that
$T_{x_0} \tau = \tilde{\tau}_{x_0} T_{x_0}$, for some $x_0 \in X$.
If $C(X, A)$ is $\tau$-weakly amenable, then $A$ is
$\tilde{\tau}_{x_0}$-weakly amenable.
\end{theo}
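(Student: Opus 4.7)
The plan is to recognize the statement as a direct specialization of Proposition 3.3 applied to the surjective evaluation homomorphism $T_{x_0}$, so the proof should amount to checking that every hypothesis of Proposition 3.3 is satisfied in the present situation and then invoking it.

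Concretely, I would set, in the notation of Proposition 3.3, $A_{3.3} := C(X,A)$, $B_{3.3} := A$, $\sigma_{3.3} := \tau$, $\tau_{3.3} := \tilde{\tau}_{x_0}$, and $\phi := T_{x_0}$. First I would observe that $C(X,A)$ is a commutative Banach algebra because $A$ is, so the commutativity hypothesis on $A_{3.3}$ holds. Next I would note that $T_{x_0}: C(X,A) \longrightarrow A$ is a continuous epimorphism, as already recorded in the introduction. The dense-range assumption required of $\sigma_{3.3}$ is precisely the assumption that $\tau \in \mathrm{Hom}(C(X,A))$ has dense range, which is part of the hypotheses. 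The intertwining relation $\tau_{3.3}\,\phi = \phi\,\sigma_{3.3}$ reads $\tilde{\tau}_{x_0}\, T_{x_0} = T_{x_0}\,\tau$, which is precisely the hypothesis $T_{x_0}\tau = \tilde{\tau}_{x_0}T_{x_0}$.

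With these verifications in place, the $\tau$-weak amenability of $C(X,A)$ yields, by Proposition 3.3, that $A$ is $\tilde{\tau}_{x_0}$-weakly amenable, which is the desired conclusion.

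There is essentially no obstacle to overcome beyond the bookkeeping above: the entire substance of the argument is packaged inside Proposition 3.3 (which the authors attribute to \cite{T. I}). The one point worth checking with a little care, were one to spell out a self-contained proof instead, is that the pullback along $T_{x_0}$ of a Banach $A$-bimodule structure turns any $\tilde{\tau}_{x_0}$-derivation $D : A \longrightarrow A^{*}$ into a $\tau$-derivation $D \circ T_{x_0}$ valued in $C(X,A)^{*}$ (via the dual of $T_{x_0}$) and that surjectivity of $T_{x_0}$ plus the dense range of $\tau$ are what let one transport the conclusion back from $C(X,A)$ to $A$; but this is exactly the content of Proposition 3.3, so no independent work is needed.
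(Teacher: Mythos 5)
Your proposal is correct and coincides with the paper's own proof, which consists precisely of invoking Proposition 3.3 with $\phi = T_{x_0}$; your explicit verification of the hypotheses (commutativity of $C(X,A)$, surjectivity and continuity of $T_{x_0}$, dense range of $\tau$, and the intertwining relation $T_{x_0}\tau = \tilde{\tau}_{x_0}T_{x_0}$) just spells out what the paper leaves implicit.
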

\begin{proof}
We use Proposition 3.3.
\end{proof}
We extend [1, Theorem 1.8.4] as follows, where the proof reads
somehow the same lines.
\begin{prop} \label{3.3}
Let $I$ be an ideal in a commutative algebra $A$, $E$ be an
$A$-module and $ D: I \longrightarrow E$ be a $\sigma $-derivation.
Then the map $$\tilde{D} : I \times A \longrightarrow E , \ \ \
\tilde{D}(a,b) := D(ab)-\sigma(b) \cdot D(a)$$ is  bilinear such
that

(i)  $\tilde{D}(a, b) = \sigma(a)\cdot D(b) \ \ \        (a, b \in
I)$;

(ii) for each $a\in I^{2}$, the map $b \longmapsto \tilde{D}(a,b),$
$ A\longrightarrow E$ is a $\sigma$-derivation.
\end{prop}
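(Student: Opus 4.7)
Bilinearity of $\tilde{D}$ is immediate from the definition. For (i), fix $a, b \in I$; since $I$ is an ideal, $ab \in I$, and the $\sigma$-derivation identity gives $D(ab) = D(a) \cdot \sigma(b) + \sigma(a) \cdot D(b)$. Because $A$ is commutative, the left and right $A$-actions on $E$ agree, so $D(a) \cdot \sigma(b) = \sigma(b) \cdot D(a)$, and therefore $\tilde{D}(a, b) = \sigma(a) \cdot D(b)$.

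For (ii), linearity in $b$ follows from the bilinearity of $\tilde{D}$, so the task is to verify the Leibniz-type identity
\[
\tilde{D}(a, b_1 b_2) = \tilde{D}(a, b_1) \cdot \sigma(b_2) + \sigma(b_1) \cdot \tilde{D}(a, b_2) \qquad (b_1, b_2 \in A).
\]
By bilinearity in the first slot, and since every element of $I^2$ is a finite sum of products $cd$ with $c, d \in I$, it suffices to treat $a = cd$. The strategy is to expand every $D$-term that appears in the identity into a sum of terms of the form $\sigma(x) \cdot D(y)$ by repeatedly applying the $\sigma$-derivation rule, \emph{always} pairing two elements of $I$; this is legal thanks to the factorisations $cd = c \cdot d$, $cdb_i = c(db_i) = (cb_i)d$, and $cdb_1 b_2 = (cb_1)(db_2) = (cb_2)(db_1)$, in each of which both factors lie in $I$ by the ideal property.

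After substitution, the required Leibniz identity reduces to
\[
D(cdb_1 b_2) - D(cdb_1)\sigma(b_2) - \sigma(b_1) D(cdb_2) + \sigma(b_1 b_2) D(cd) = 0.
\]
Expanding the four $D$-terms via the factorisations above and cancelling using commutativity of $A$ leaves the expression $\sigma(cb_1) D(db_2) + \sigma(db_2) D(cb_1) - \sigma(cb_2) D(db_1) - \sigma(db_1) D(cb_2)$, which vanishes precisely because applying the $\sigma$-derivation rule to the two factorisations $(cb_1)(db_2)$ and $(cb_2)(db_1)$ of $cdb_1 b_2$ must yield the same value of $D(cdb_1 b_2)$. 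The main obstacle is book-keeping: at no point may the $\sigma$-derivation identity be invoked on a product in which one factor lies outside $I$, so every expansion of $D(\cdot)$ must first be re-bracketed to place two elements of $I$ side by side. Once this discipline is maintained, the computation is a mechanical parallel of the classical argument of [1, Theorem 1.8.4].
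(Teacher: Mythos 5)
Your proposal is correct and follows essentially the paper's own argument: a direct verification of the Leibniz identity for $b\mapsto\tilde{D}(a,b)$ on elementary products $a=cd\in I^{2}$ (extended by linearity in the first slot), expanding each $D$-term via the $\sigma$-derivation rule applied only to products of two elements of $I$ through the rebracketings $cdb_i=(cb_i)d=c(db_i)$ and $cdb_1b_2=(cb_1)(db_2)=(cb_2)(db_1)$ — the paper regroups the terms directly rather than cancelling a four-term residue, but the computation is the same. One small precision: the step $D(a)\cdot\sigma(b)=\sigma(b)\cdot D(a)$ used in (i) (and tacitly in (ii)) comes from $E$ being an $A$-module, i.e.\ a symmetric bimodule, rather than from commutativity of $A$ alone — a convention the paper also uses silently.
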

\begin{proof}
We only prove the clause (ii). For $a_{1}, a_{2} \in I$, and $b_{1},
b_{2} \in A$, we have
 \begin{align*}
  \tilde{D}(a_{1}a_{2}, b_{1}b_{2})
  &=D(a_{1}a_{2}b_{1}b_{2})
  -\sigma (b_{1}b_{2}) \cdot D(a_{1}a_{2}) \\
  &=\sigma (a_{1}b_{1})\cdot D(a_{2}b_{2})+ \sigma(a_{2}b_{2}) \cdot D(a_{1}b_{1})
  -\sigma(b_{1}b_{2}) \cdot D(a_{1}a_{2}) \\
  &= \sigma(b_{1}) \cdot [\sigma(a_{1}) \cdot D(a_{2}b_{2})- \sigma(b_{2}) \sigma (a_{1})\cdot D(a_{2})] \\
  &+ \sigma (b_{2}) \cdot  [\sigma(b_{1}a_{1})\cdot
   D(a_{2}) +\sigma(a_{2}) \cdot D(a_{1}b_{1})
  - \sigma(b_{1}) \cdot D(a_{1}a_{2})] \\
  &=\sigma(b_{1}) \cdot [D(a_{1}a_{2}b_{2})-\sigma(a_{2}b_{2}) \cdot D(a_{1})
  -\sigma(b_{2}a_{1}) \cdot D(a_{2})]  \\
&+ \sigma(b_{2}) \cdot  [D(a_{1}a_{2}b_{1})-\sigma (b_{1}) \cdot D(a_{1}a_{2})] \\
& = \sigma(b_{1}) \cdot [D(a_{1}a_{2}b_{2})-\sigma(b_{2}) \cdot D(a_{1}a_{2})] \\
&+
 \sigma(b_{2}) \cdot [D(a_{1}a_{2}b_{1})-\sigma (b_{1}) \cdot D(a_{1}a_{2})]\\
&=\sigma(b_{1}) \cdot \tilde{D}(a_{1}a_{2},b_{2}) + \sigma(b_{2}) \cdot \tilde{D}(a_{1}a_{2},b_{1})
\end{align*}
as required.
\end{proof}
The following is an analogue of [1, Lemma 2.8.68].
\begin{lemm} \label{3.5}
Let $A$ be a $\sigma$-weakly amenable commutative Banach algebra,
$I$ be a closed ideal in $A$, and $E$ be a
 Banach $A$-module. Take $ \sigma \in Hom(A)$
with a dense range such that $\sigma(I)\subseteq I$. Then $D\mid_{
I^{4}}=0$ for each $D\in Z^{1}_{\sigma}(I,E)$.
\end{lemm}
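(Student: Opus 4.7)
The plan is to reduce the vanishing of $D$ on $I^4$ to Proposition 3.1, using the bilinear object produced by Proposition 3.3. Given $D\in Z^1_\sigma(I,E)$, Proposition 3.3 provides a bilinear map $\tilde{D}:I\times A\to E$ with the properties (i) $\tilde{D}(a,b)=\sigma(a)\cdot D(b)$ for $a,b\in I$, and (ii) for each $a\in I^2$ the map $b\mapsto\tilde{D}(a,b)$ is a $\sigma$-derivation from $A$ to $E$. Boundedness of this derivation is immediate from the defining formula $\tilde{D}(a,b)=D(ab)-\sigma(b)\cdot D(a)$ together with continuity of $D$, of $\sigma$, and of the $A$-action on $E$. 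Since $A$ is commutative and $\sigma$-weakly amenable and $\sigma(A)$ is dense in $A$, Proposition 3.1 yields $Z^1_\sigma(A,E)=\{0\}$; applied to the derivation in (ii), this forces
\[
\tilde{D}(a,b)=0 \qquad (a\in I^2,\ b\in A).
\]

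Next I would fix arbitrary $a_1,a_2,a_3,a_4\in I$, so that both $a_1a_2$ and $a_3a_4$ lie in $I^2$. By the vanishing just obtained, $\tilde{D}(a_1a_2,a_3a_4)=0$, and the defining formula for $\tilde{D}$ rearranges to
\[
D(a_1a_2a_3a_4)=\sigma(a_3a_4)\cdot D(a_1a_2).
\]
By clause (i), applied with $a=a_3a_4$ and $b=a_1a_2$ (both of which lie in $I$), the right-hand side is precisely $\tilde{D}(a_3a_4,a_1a_2)$, and this too is zero by the previous step. Hence $D(a_1a_2a_3a_4)=0$, and by linearity $D$ vanishes on all of $I^4$.

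The main obstacle, I expect, is nothing more than careful bookkeeping: one has to ensure that the hypotheses of Propositions 3.1 and 3.3 are met with the algebras, ideals, and modules as presented --- in particular that $E$ is consistently viewed as an $A$-module (which is given by hypothesis), that $\sigma(I)\subseteq I$ legitimately lets $\sigma|_I$ play the role of the twisting homomorphism on $I$, and that $\tilde{D}$ is separately continuous so that clause (ii) genuinely produces a \emph{bounded} $\sigma$-derivation to which Proposition 3.1 applies. Once these routine checks are made, the argument is a direct $\sigma$-twisted copy of the classical proof of $[1,\,\text{Lemma 2.8.68}]$.
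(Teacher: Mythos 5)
Your argument is correct, and it uses the same two ingredients as the paper (the bilinear map of Proposition 3.5 and the vanishing theorem, Proposition 3.1), but its execution is genuinely more direct. The paper, following the classical proof of [1, Lemma 2.8.68], does not apply these propositions to $D$ itself: it first forms $F:={}_{A}\mathcal{B}(I,E)$ with $(a\cdot T)(b)=T(ab)$ and pushes $D$ into $F$ via $j(x)(a)=a\cdot x$, applies Propositions 3.5 and 3.1 to $j\circ D$, and only gets the evaluated identity $\sigma(a)c\cdot D(b)=0$ for $a\in I^{2}$, $b,c\in I$; it then uses the hypothesis $\sigma(I)\subseteq I$ (choosing $c\in\sigma(I)$) to read this as $\sigma(I^{3})\cdot D(I)=0$ and finishes by Leibniz on products of four elements. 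You instead keep $\tilde{D}$ valued in $E$, note that $E$ is an $A$-module by hypothesis so Proposition 3.1 applies directly to $b\mapsto\tilde{D}(a,b)$, and obtain $D(a_{1}a_{2}a_{3}a_{4})=\sigma(a_{3}a_{4})\cdot D(a_{1}a_{2})=\tilde{D}(a_{3}a_{4},a_{1}a_{2})=0$ in one stroke, without the auxiliary module $F$, the map $j$, or (at that stage) the hypothesis $\sigma(I)\subseteq I$. Both proofs are valid; the paper's detour through $F$ is what one would need if $E$ were only a Banach $I$-module (as in Dales' original setting), whereas under the stated hypothesis that $E$ is a Banach $A$-module your shortcut is legitimate and cleaner. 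Two small touches you should make explicit: the derivation in clause (ii) must be bounded before Proposition 3.1 applies (you do note this), and if $I^{4}$ is read as the closed linear span of products of four elements, invoke the continuity of $D$ in the final step rather than linearity alone.
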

\begin{proof}
We first observe that $F:={}_{A}{\mathcal{B}(I,E)}$ is a Banach
$A$-module for the action $(a \cdot T)(b) =T(ab)$,
 $(a\in A, b\in I)$. Then we notice that the map $j: E\longrightarrow F$ with $j(x)(a)=a \cdot x$,  $(a\in I, x\in E)$ belongs
  to ${}_{A}{\mathcal{B}(E,F)}$. Whence $j\circ D\in Z^{1}_{\sigma}(I,
  E)$. Clearly the map  $$ \tilde{D}: I \times A \rightarrow F ,  \ \ \ (a, b) \longmapsto (j\circ D)(a, b)
  -\sigma(b) \cdot (j \circ D)(a)            $$ is bilinear. Therefore $\tilde{D}(a, b) = \sigma\ (a) \cdot (j \circ
  D)(b)$, by Proposition 3.5 (i), for $a, b\in I$. Take $a\in I^{2}$. By Proposition 3.5 (ii), the map $ A \longrightarrow F
  $, $ b \longmapsto \tilde{D}(a, b)$, is a $\sigma$-derivation. It follows from Proposition 3.1, that this map is zero and so
$\tilde{D}(I^{2}\times A) =0$. Hence, for $a\in I^{2}$ and $b, c \in
I$, we have

$$ \sigma (a)c \cdot D(b)= (j\circ D)(b)(\sigma(a)c) = (\sigma (a) \cdot (j \circ
D)(b))(c) = \tilde{D}(a, b)(c) = 0. $$ In particular, choosing $c
\in \sigma(I)$, we may see that $ \sigma (I^{3}) \cdot D(I) =0$, and
whence $D\mid I^{4}=0$.
\end{proof}
\begin{prop}
Let $A$ be a $\sigma$-weakly amenable commutative Banach algebra,
 $I$ be a closed ideal in $A$, and  $ \sigma \in Hom(A)$ with
a dense range such that  $\overline{\sigma(I)}=I$. Then $I$ is
$\sigma$-weakly amenable if and only if $\overline{I^{2}} = I$.
\end{prop}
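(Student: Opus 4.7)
The plan is to handle the two implications separately, leveraging the machinery already developed in the excerpt.

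For the ``only if'' direction, suppose that $I$ is $\sigma$-weakly amenable. The hypotheses $\sigma(I)\subseteq I$ and $\overline{\sigma(I)}=I$ say precisely that $\sigma|_I \in Hom(I)$ has dense range inside $I$. Then the remark made immediately before Proposition 3.1, which asserts that any $\sigma$-weakly amenable Banach algebra on which the homomorphism has dense range satisfies $\overline{B^{2}}=B$, can be applied to $B=I$ with homomorphism $\sigma|_I$. This yields $\overline{I^{2}}=I$ at once.

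For the ``if'' direction, assume $\overline{I^{2}}=I$ and let $D \in Z^{1}_{\sigma}(I, I^{*})$ be an arbitrary $\sigma$-derivation. Because $A$ is commutative, so is $I$, and consequently $I^{*}$ is a symmetric Banach $I$-module; every $\sigma$-inner derivation $a \mapsto \sigma(a)\cdot x - x \cdot \sigma(a)$ into $I^*$ therefore vanishes identically. Hence $\sigma$-weak amenability of $I$ amounts to $Z^{1}_{\sigma}(I,I^{*}) = \{0\}$, and it suffices to prove that $D=0$. Since $I^{*}$ is naturally a Banach $A$-module, inherited from the $A$-bimodule structure of the ideal $I$, Lemma 3.6 applies directly to $D$ and delivers $D|_{I^{4}}=0$. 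A straightforward induction on $n$ upgrades $\overline{I^{2}}=I$ to $\overline{I^{n}}=I$ for every $n\geq 2$: given $\overline{I^{n}}=I$, any product $ab$ with $a,b\in I$ is approximated by $a_{k}b \in I^{n+1}$ where $a_{k}\in I^{n}$ approximates $a$, so $I^{2}\subseteq \overline{I^{n+1}}$ and thus $\overline{I^{n+1}}=I$. In particular $\overline{I^{4}}=I$, and continuity of $D$ promotes $D|_{I^{4}}=0$ to $D=0$ on all of $I$.

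I do not foresee any serious obstacle. The only point requiring a moment of care is checking that Lemma 3.6 genuinely applies to $D: I \longrightarrow I^{*}$: one needs $I^{*}$ to carry a Banach $A$-module structure and $\sigma$ to preserve $I$, both of which are present. Everything else reduces to the density argument $\overline{I^{4}}=I$ and to the symmetry observation that collapses ``$\sigma$-inner'' to ``zero'' in the commutative setting.
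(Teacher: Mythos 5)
Your proposal is correct and follows essentially the same route as the paper: the ``only if'' direction is the dense-range remark preceding Proposition 3.1 (i.e.\ [6, Theorem 6]) applied to $I$, and the ``if'' direction applies Lemma 3.6 to get $D|_{I^{4}}=0$ and then uses $\overline{I^{4}}=I$ with continuity to conclude $D=0$. Your extra observations (the explicit induction giving $\overline{I^{4}}=I$ and the symmetry remark that inner $\sigma$-derivations into $I^{*}$ vanish) are fine but not needed beyond what the paper's argument already contains.
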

\begin{proof}
If $I$ is $\sigma$-weakly amenable, then $\overline{I^{2}}=I$ by
\cite[Theorem 6]{T. I}.

Conversely if $\overline{I^{2}}=I$, then $\overline{I^{4}}=I$.
Suppose that $D\in Z^{1}_{\sigma}(I , I^{*})$. Then by Lemma
\ref{3.5}, $D\mid_{I^{4}}=0$ so that $D=0$. Hence $I$ is
$\sigma$-weakly amenable.
\end{proof}

\begin{lemm}\label{3.8}
Let $X$ be a compact Hausdorff space, let $A$ be a Banach algebra
and let $\sigma \in Hom(A)$. If $\sigma$ has a dense range, then
$\tilde{\sigma}$ has a dense range as well.
\end{lemm}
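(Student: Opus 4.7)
The goal is to show that given $h \in C(X,A)$ and $\varepsilon > 0$, one can find $g \in C(X,A)$ such that $\Vert \tilde{\sigma}(g) - h \Vert_\infty < \varepsilon$, which amounts to a uniform approximation of $h$ by a function of the form $\sigma \circ g$. My plan is to exploit compactness of $X$ together with a partition-of-unity argument to glue local approximations coming from the density of $\sigma(A)$ in $A$.

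Here is the outline I would follow. First, since $X$ is compact and $h: X \to A$ is continuous, the image $h(X)$ is a compact subset of $A$. Given $\varepsilon > 0$, cover $h(X)$ by finitely many open balls $B(h(x_i), \varepsilon/3)$ for $i = 1, \dots, n$, with $x_i \in X$. Pulling back via $h$ yields a finite open cover $\{U_i\}_{i=1}^n$ of $X$, where $U_i := h^{-1}\bigl(B(h(x_i), \varepsilon/3)\bigr)$. Since $X$ is compact Hausdorff (hence normal), there exists a continuous partition of unity $\{\phi_i\}_{i=1}^n \subset C(X)$ subordinate to this cover: $\phi_i \geq 0$, $\operatorname{supp}(\phi_i) \subset U_i$, and $\sum_i \phi_i \equiv 1$ on $X$.

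Next, using the density of $\sigma(A)$ in $A$, for each $i$ pick $a_i \in A$ such that $\Vert \sigma(a_i) - h(x_i) \Vert < \varepsilon/3$. Define
\[
g := \sum_{i=1}^n \phi_i \, a_i \in C(X, A),
\]
where $\phi_i a_i$ is the element of $C(X,A)$ introduced in the preliminaries. By linearity of $\sigma$, we have $\tilde{\sigma}(g)(x) = \sum_i \phi_i(x)\, \sigma(a_i)$ for every $x \in X$. The estimate then follows by the standard partition-of-unity trick: for any fixed $x \in X$, only those $i$ with $x \in U_i$ contribute, and for such $i$ we have $\Vert \sigma(a_i) - h(x) \Vert \leq \Vert \sigma(a_i) - h(x_i)\Vert + \Vert h(x_i) - h(x)\Vert < 2\varepsilon/3$. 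Combining with $\sum_i \phi_i(x) = 1$ gives $\Vert \tilde{\sigma}(g)(x) - h(x) \Vert \leq 2\varepsilon/3$ uniformly in $x$, whence $\Vert \tilde{\sigma}(g) - h \Vert_\infty \leq 2\varepsilon/3 < \varepsilon$.

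I do not anticipate any genuine obstacle: the compactness of $X$ and of $h(X)$ make everything finite, and the only ingredients required beyond density of $\sigma(A)$ are normality of $X$ (to obtain the partition of unity) and linearity of $\sigma$ (to push the sum inside). The only point worth double-checking in a written-out version is that the resulting $g$ genuinely lies in $C(X,A)$, which is immediate since it is a finite sum of products of scalar continuous functions and constant $A$-valued functions.
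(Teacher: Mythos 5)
Your proof is correct, but it follows a genuinely different route from the paper. The paper's argument is a two-step reduction: it only approximates elementary elements $h=fa$ (with $f\in C(X)$, $a\in A$) by choosing $a_n$ with $\sigma(a_n)\to a$ and noting $\tilde{\sigma}(fa_n)=f\sigma(a_n)\to fa$, and then invokes the known fact from Ghamarshoushtari--Zhang that the linear span of such elements is dense in $C(X,A)$. You instead approximate an \emph{arbitrary} $h\in C(X,A)$ directly: compactness of $h(X)$ gives a finite cover by $\varepsilon/3$-balls, a partition of unity $\{\phi_i\}$ subordinate to the pulled-back cover of $X$ lets you glue the local approximants $\sigma(a_i)\approx h(x_i)$, and the standard estimate (using $\phi_i(x)>0\Rightarrow x\in U_i$ via the support condition, plus $\sum_i\phi_i\equiv 1$) yields $\Vert\tilde{\sigma}(g)-h\Vert_\infty<\varepsilon$ with $g=\sum_i\phi_i a_i$. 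In effect you are reproving, inside your argument, the density of $\operatorname{span}\{fa: f\in C(X), a\in A\}$ in $C(X,A)$ that the paper simply cites, while simultaneously building in the $\sigma$-approximation. What your approach buys is self-containedness (no appeal to [2]) and a single uniform estimate; what the paper's approach buys is brevity, since it outsources the topological work to a quoted result and only needs the trivial observation $\tilde{\sigma}(fa_n)=f\sigma(a_n)$. Both are valid; the only point to state carefully in a written version is the existence of a partition of unity subordinate to a finite open cover of a compact Hausdorff space, which is standard via normality and Urysohn's lemma.
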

\begin{proof}
Take $f \in C(X)$ and $a \in A$ and put $h:= fa \in C(X, A)$. By the
assumption, there exists a sequence $(a_{n}) \subseteq A$ such that
$ \lim_n \sigma(a_{n}) = a$. Define $h_{n}:= fa_{n} $, $( n =
1,2,...)$. Then it is easy to verify that $ \lim_n
\tilde{\sigma}(h_{n}) = h$. This completes the proof, since the set
of all linear combinations of elements of $C(X, A)$ of the form $fa$
$(f \in C(X), a \in A)$ is dense in $C(X, A)$ [2].
\end{proof}
We recall that a homomorphism $\sigma \in Hom(A)$ is extended to a
homomorphism $\sigma^{\sharp} \in Hom (A^{\sharp})$ through
$\sigma^{\sharp}(a + \lambda e) = \sigma(a) + \lambda e $ $(a \in A,
\lambda \in \mathbb{C})$, where $e$ is the identity of $A^{\sharp}$,
the unitization of $A$.

Now, we are ready to prove our last goal.
\begin{theo}
Let $X$ be a compact Hausdorff space, let $A$ be a commutative
Banach algebra and let $\sigma \in Hom(A)$ with a dense range. If
$A$ is ${\sigma}$-weakly amenable, then $ C(X,A)$ is
$\tilde{\sigma}$-weakly amenable.
\end{theo}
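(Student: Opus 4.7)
The plan is to reduce to Proposition 3.2 by unitizing $A$, and then descend back to $C(X,A)$ via the ideal-type criterion Proposition 3.7. Let $A^\sharp$ denote the unitization of $A$ and $\sigma^\sharp\in\mathrm{Hom}(A^\sharp)$ its canonical extension (recalled just before the theorem). Since $\sigma$ has dense range and $\sigma^\sharp(e)=e$, the range of $\sigma^\sharp$ is dense in $A^\sharp$, and the commutativity of $A$ transfers to $A^\sharp$. Moreover, the constant net $(e)$ trivially makes $(\sigma^\sharp(e))$ a bounded approximate identity for $A^\sharp$.

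The heart of the argument is to upgrade $\sigma$-weak amenability of $A$ to $\sigma^\sharp$-weak amenability of $A^\sharp$. Given a $\sigma^\sharp$-derivation $D\colon A^\sharp\lo(A^\sharp)^*$, the Leibniz rule at the unit gives $D(e)=\sigma^\sharp(e)\cdot D(e)+D(e)\cdot\sigma^\sharp(e)=2D(e)$, hence $D(e)=0$. Since $A^\sharp$ is commutative, $(A^\sharp)^*$ is a symmetric Banach $A$-bimodule, so the restriction $D|_A\colon A\lo(A^\sharp)^*$ is a $\sigma$-derivation into a Banach $A$-module. Proposition 3.1 forces $D|_A=0$, and linearity then yields $D\equiv 0$. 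Thus $A^\sharp$ is $\sigma^\sharp$-weakly amenable, and Proposition 3.2, applied to $(A^\sharp,\sigma^\sharp)$ with the bounded net $(e)$, produces the $\widetilde{\sigma^\sharp}$-weak amenability of $C(X,A^\sharp)$.

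To transport this back to $C(X,A)$, I would observe that $C(X,A)$ is a closed ideal in the commutative algebra $C(X,A^\sharp)$ and that $\widetilde{\sigma^\sharp}|_{C(X,A)}=\tilde{\sigma}$. Lemma 3.8 applied to $\sigma^\sharp$ gives that $\widetilde{\sigma^\sharp}$ has dense range in $C(X,A^\sharp)$, while the same argument applied to $\sigma$ yields $\overline{\widetilde{\sigma^\sharp}(C(X,A))}=\overline{\tilde{\sigma}(C(X,A))}=C(X,A)$. Proposition 3.7 then reduces the $\tilde{\sigma}$-weak amenability of $C(X,A)$ to checking $\overline{C(X,A)^2}=C(X,A)$. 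This follows from the density $\overline{A^2}=A$ (noted just before Proposition 3.1): for $f\in C(X)$ and $a\in A$, approximating $a$ by finite sums $\sum a_ib_i\in A^2$ gives $fa\approx\sum(1_{a_i})(fb_i)\in C(X,A)^2$, and the density of such elementary functions in $C(X,A)$ completes the argument.

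The main obstacle is the middle step, since Proposition 3.1 on its face addresses only derivations starting from $A$ itself, not from $A^\sharp$. The trick is to exploit the commutativity of $A^\sharp$ to regard $(A^\sharp)^*$ as a symmetric Banach $A$-module, so that Proposition 3.1 disposes of $D|_A$ in one stroke, while the unit axiom handles $D(e)$ for free. Everything else is a careful verification of the hypotheses of Proposition 3.2, Lemma 3.8 and Proposition 3.7.
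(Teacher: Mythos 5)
Your argument is correct and follows essentially the same route as the paper: unitize $A$, apply Proposition 3.2 to $(A^\sharp,\sigma^\sharp)$ with the unit as the bounded net, and then descend to the closed ideal $C(X,A)$ of $C(X,A^\sharp)$ via Lemma 3.8 and Proposition 3.7, using $\tilde{\sigma^\sharp}|_{C(X,A)}=\tilde{\sigma}$ and $\overline{C(X,A)^2}=C(X,A)$. The only difference is that where the paper cites [6, Theorem 12] for the $\sigma^\sharp$-weak amenability of $A^\sharp$ and the proof of [8, Theorem 1] for the density of $C(X,A)^2$, you verify these facts directly (via $D(e)=0$, the symmetry of $(A^\sharp)^*$ over $A$ and Proposition 3.1, respectively an approximation of $fa$ by $\sum 1_{a_i}(fb_i)$), which is sound.
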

\begin{proof}
Suppose that $A$ is ${\sigma}$-weakly amenable. By [6, Theorem 12],
$A^{\sharp}$ is $\sigma^{\sharp}$-weakly amenable. Applying
Proposition 3.2, we see that $C(X, A^{\sharp})$ is
$\tilde{\sigma^{\sharp}}$-weakly amenable. Our assumptions together
with [6, Theorem 6], imply that $A^{2}$ is dense in $A$. We learn
from the proof of [8, Theorem 1] that ${C(X, A)^{2}}$ is dense in
$C(X, A)$, and also $C(X, A)$ is a closed ideal of $C(X,
A^{\sharp})$. Next, it is easy to verify that $
\tilde{\sigma^{\sharp}}(f) = \tilde{\sigma}(f)$, for all $f \in C(X,
A)$. Hence $$ \overline{\tilde{\sigma^{\sharp}}(C(X, A))} =
\overline{\tilde{\sigma}(C(X, A))} = C(X, A) $$
 by Lemma \ref {3.8}. Now, an application of Proposition 3.7 yields
 that $C(X, A)$ is $\tilde{\sigma}$-weakly amenable.
\end{proof}


 \end{document}